\DeclareMathOperator{\arctanh}{arctanh}
\theoremstyle{theorem}
\newtheorem{theorem}{Theorem}
\newtheorem*{conjecture}{Conjecture}
\newtheorem{lemma}{Lemma}
\theoremstyle{definition}
\newtheorem{definition}{Definition}
\newtheorem{remark}{Remark}
\DeclareMathOperator{\dist}{dist}
\DeclareMathOperator{\CAT}{CAT}
\DeclareMathOperator{\CBB}{CBB}
\title[Inradius estimates for convex domains in Alexandrov spaces]{Inradius estimates for convex domains in 2-dimensional Alexandrov spaces}
\author{Kostiantyn Drach}
\address{Jacobs University Bremen, Research I, Campus Ring 1, 28759 Bremen, Germany}
\email{k.drach@jacobs-university.de}
\begin{document}

\maketitle

\begin{abstract}
We obtain sharp lower bounds on the radii of inscribed balls for strictly convex isoperimetric domains lying in a 2-dimensional Alexandrov metric space of curvature bounded below. We also characterize the case when such bounds are attained.

\textbf{Keywords:} Alexandrov metric space; lower curvature bounds; inscribed ball; inscribed radius; $\lambda$-convexity.
\end{abstract}

\section*{Introduction and statement of the main results}

In this paper we address a reverse isoperimetric-type question, namely how \textit{small} the radius of the inscribed ball (\textit{inradius}) of a given domain can be. A direct question about the \textit{largest} inradius is rather trivial (for example, among all domains $D \subset \mathbb R^n$ of given volume only a ball has the largest inscribed ball, being the domain itself). Therefore, to make a reverse question meaningful and avoid trivial answers we have to impose some restrictions on the geometry of the domain.

One of the natural ways to do this is to assume some curvature conditions. Chakerian, Johnson and Vogt \cite{CJV76} obtained a sharp upper bound on the radius of the circumscribed ball for closed plane curves with curvature $|k| \leqslant 1$ (in a weak sense). Milka \cite{Mi78} substantially extended their result to curves with the same curvature restriction lying in $\mathbb R^n$, $n \geqslant 2$. Later on Alexander and Bishop \cite{AB97}, by using comparison techniques, transferred the results of Chakerian et al. and of Milka to $\CAT(\kappa)$ spaces. The goal of this note is to obtain lower bounds for radii of inscribed balls in Alexandrov spaces with lower curvature bounds, that is to prove a result \textit{dual} to that of Alexander--Bishop.

Let us also mention that the present work was motivated by the results concerning so-called \textit{reverse isoperimetric inequalities}. Particularly, Howard and Treibergs \cite{HTr95} proved a sharp reverse isoperimetric inequality on the Euclidean plane for closed embedded curves whose curvature $k$, in a weak sense, satisfies $|k| \leqslant 1$, and whose length is in $[2\pi, 14\pi/3)$ (see \cite[Theorem 4.1]{HTr95}). A dual result was obtained in all constant curvature spaces by Borisenko and the author in the series of papers \cite{BorDr14, BorDr15_1, Dr14}, where a two-dimensional reverse isoperimetric inequality was proved for so-called \textit{$\lambda$-convex} curves, i.e. curves whose curvature $k$, in a weak sense, satisfies $k \geqslant \lambda > 0$ (see Definition~\ref{Def:lconv} below) in constant curvature spaces. Recently, these results were generalized in \cite{BorNA} for $\lambda$-convex curves in Alexandrov metric spaces of curvature bounded below. We will use some of the results from \cite{BorNA} in the present paper.


Before stating the main result, let us set up some background and fix notation. For an extensive treatment of the theory of metric spaces, in particular, metric spaces of bounded curvature (Alexandrov spaces) we refer the reader to \cite{Al48} and \cite{BBI01}. 

Let $M$ be a geodesic metric space. Denote by $|pq|_M$ the distance between any two points $p, q \in M$. For a closed compact domain $D \subset M$ an \textit{inscribed ball} (or \textit{inball}, for short) is a largest ball contained in $D$. Hence the radius $r$ of an inball (\textit{inradius}) is given by 
\[
r = \max_{p \in D} \min_{q \in \partial D} |pq|_M.
\]

Denote by $M^n(\kappa)$ the $n$-dimensional \textit{model space of curvature $\kappa$}, that is the $n$-sphere $\mathbb S^n(k^2)$ of radius $1/k$ for $\kappa = k^2 > 0$, the Euclidean space $\mathbb R^n$ for $\kappa = 0$, and the hyperbolic space $\mathbb H^n(-k^2)$ for $\kappa = -k^2 < 0$.

For a triple of points $p,q,r \in M$ and the corresponding geodesic segments $pq$, $qr$, $rp$ (forming the \textit{triangle} $\triangle pqr$ in M) we associate a triangle $\triangle \tilde p \tilde q \tilde r$ in $M^2(\kappa)$ such that $|pq|_M = |\tilde p \tilde q|_{M^2(\kappa)}$, $|qr|_M = |\tilde q \tilde r|_{M^2(\kappa)}$, $|rp|_M = |\tilde r \tilde p|_{M^2(\kappa)}$. The latter triangle is called a \textit{comparison triangle}. A complete geodesic metric space $M$ is called an \textit{Alexandrov space with curvature $\geqslant\!\kappa$} (and abbreviated as $\CBB(\kappa)$) if for the angles of every triangle $\triangle pqr$ in $M$ the corresponding angles of a comparison triangle $\triangle \tilde p \tilde q \tilde r$ satisfy
\[
\angle pqr \geqslant \angle \tilde p \tilde q \tilde r, 
\quad \angle qrp \geqslant \angle \tilde q \tilde r \tilde p, \quad \angle rpq \geqslant \angle \tilde r \tilde p \tilde q, 
\]
and the sum of adjacent angles in $M$ equals $\pi$ (see  \cite{Al48, BBI01, BGP92} for details).

Further below we will say that a set $D$ in a metric space $M$ is a \emph{closed topological disk} if $D$ is closed, the boundary $\partial D$ of $D$ is a Jordan curve, and the interior of $D$ is homeomorphic to a disk. Most of the time we will be working in the setting when $\partial D$ is a `nice' curve (e.g.\,piecewise smooth).

Let $M$ be a two-dimensional $\CBB(\kappa)$ space, and $D \subset M$ be a closed topological disk with the rectifiable boundary curve $\gamma$. For every subarc $\widehat \gamma$ of $\gamma$ we can define the integral geodesic curvature (or the \textit{swerve}) of this arc as follows (see \cite[p.\,309]{Al48} for details). Suppose $\sigma := p_0p_1 \ldots p_Np_{N+1}$ is a broken geodesic line with vertices $p_i \in \widehat \gamma$ and such that $p_0$ and $p_{N+1}$ are the endpoints of $\widehat \gamma$. For each $i \in \{1, \ldots, N\}$ let $\alpha_i$ be the angle between geodesics $p_{i-1}p_i$ and $p_i p_{i+1}$ measured from the side of $D$, and let $\alpha_0$ (respectively, $\alpha_{N+1}$) be the angle between $\widehat \gamma$ and $p_0p_1$ (resp.\,$p_N p_{N+1}$) at $p_0$ (resp.\,$p_{N+1}$) measured from the side of $D$. Then the \emph{integral geodesic curvature} $\varphi(\widehat \gamma)$ of $\widehat \gamma$ (with respect to $D$) is defined as
\[
\varphi(\widehat\gamma):=\lim_{\sigma \to \widehat \gamma} \sum_{i=0}^{N+1} (\pi - \alpha_i).
\]  
It can be shown that the swerve of any subarc in our setting is well defined.

\begin{definition}[$\lambda$-convex domain]
\label{Def:lconv}
Let $M$ be a two-dimensional $\CBB(\kappa)$ space. A closed topological disk $D \subset M$ with the rectifiable boundary curve $\gamma$ is called a \textit{$\lambda$-convex domain} (with $\lambda>0$) if for every subarc $\widehat \gamma$ of $\gamma$
\begin{equation}
\label{Eq:swerve}
\varphi(\widehat \gamma) \geqslant \lambda \cdot s(\widehat \gamma),
\end{equation}
where $s(\widehat \gamma)$ is the length of the arc $\widehat \gamma$. 
\end{definition}

For domains with smooth boundary in two-dimensional Riemannian manifolds condition~(\ref{Eq:swerve}) is equivalent to the assumption that the geodesic curvature of the boundary at each point is at least $\lambda$. In general, $\lambda$-convex domains may have non-smooth points (such as corners) on the boundary. 

Recall that for a pair of convex curves $\gamma_1$ and $\gamma_2$ in $M^2(\kappa)$ intersecting at some point $s \in \gamma_1 \cap \gamma_2$ we say that $\gamma_2$ is \emph{locally supporting to $\gamma_1$ at $s$} if there is an open neighborhood $U$ of $s$ such that $D_1 \cap U \subset D_2 \cap U$, where $D_1$ (respectively, $D_2$) is a convex set with $\gamma_1 \subset \partial D_1$ (resp.\,$\gamma_2 \subset \partial D_2$). Observe that if $\gamma_1$ has a well-defined tangent geodesic at $s$, then $\gamma_2$ is locally supporting to $\gamma_1$ at $s$ if and only if $\gamma_1$ and $\gamma_2$ are tangent at $s$. The notion of local support provides yet another equivalent way of looking at $\lambda$-convexity in model spaces: a closed topological disk $D \subset M^2(\kappa)$ is $\lambda$-convex if it has a locally supporting curve of curvature $\lambda$ at every boundary point. This allows a generalization of $\lambda$-convexity to higher dimensions, see~\cite{Bor02, BorDr13}.

\begin{definition}[$\lambda$-convex lune]
\label{Def:LLune}
A \textit{$\lambda$-convex lune} in the model space $M^2(\kappa)$ is a compact convex region enclosed by two arcs of equal length and of constant geodesic curvature $\lambda$. 
\end{definition}

Observe that a lune is a centrally symmetric set, and hence there is a well-defined \emph{center of a $\lambda$-convex lune}.
 
From the well-known classification of curves of constant non-zero geodesic curvature it follows that for $\kappa \geqslant 0$ a $\lambda$-convex lune is enclosed by two circular arcs. The same is true for $\kappa < 0$ and $\lambda > \sqrt{-\kappa}$. However, if $\kappa < 0$ and $\lambda = \sqrt{-\kappa}$, then the lune is enclosed by two arcs of horocycles. Finally, if $\kappa < 0$ and $0<\lambda < \sqrt{-\kappa}$, then the boundary of the lune is composed of two arcs of equidistant curves (hypercycles) of curvature $\lambda$.  

Let us mention that $\lambda$-convex lunes can be constructed in a different way as follows. Suppose $F_\lambda \subset M^2(\kappa)$ is a closed convex set enclosed by a complete curve of constant geodesic curvature $\lambda$. For $\kappa \geqslant 0$, or for $\kappa < 0$ and $\lambda > \sqrt{-\kappa}$ the set $F_\lambda$ is just a closed geodesic disk, and hence $F_\lambda$ is compact. At the same time, in the hyperbolic case ($\kappa < 0$) for $\lambda \in \left(0, \sqrt{-\kappa}\right]$ the set $F_\lambda$ is unbounded: in the Poincar\'{e} model of the hyperbolic plane in the unit disk $F_\lambda$ is a closed Euclidean disk touching or intersecting the unit circle depending on whether, respectively, $\lambda=\sqrt{-\kappa}$ or $\lambda \in \left(0, \sqrt{-\kappa}\right)$. With this, every $\lambda$-convex lune can be constructed as an intersection of two regions of the form $F_\lambda$, and conversely, every pair of intersecting regions of the form $F_\lambda$ produces the $\lambda$-convex lune as their intersection. This is an equivalent way, compared to Definition~\ref{Def:LLune}, of defining a $\lambda$-convex lune. (In the simplest case, a lune is just the intersection of two geodesic disks.)

Observe that for given $\kappa$ and $\lambda > 0$, a $\lambda$-convex lune in $M^2(\kappa)$ is completely determined (up to isometry) by the length of its boundary. Hence, we have a well-defined function
\[
\rho_\lambda \colon I_\lambda \to \mathbb R_{+}, \quad L \mapsto \rho_\lambda(L),
\]
such that $\rho_\lambda(L)$ is the inradius of the $\lambda$-convex lune with the boundary of length $L$ in $M^2(\kappa)$. Here $I_\lambda$ is the natural domain of definition of this function: for $\kappa \geqslant 0$, or $\kappa < 0$ and $\lambda > \sqrt{-\kappa}$ we have $I_\lambda = \left[0, L_\lambda\right]$, where $L_\lambda$ is the length of a circle $\partial F_\lambda$; at the same time, $I_\lambda = [0, +\infty)$ for $\kappa < 0$ and $0 < \lambda \leqslant \sqrt{-\kappa}$.

We are now ready to state the main result of the paper --- a sharp comparison theorem for inradii of two-dimensional $\lambda$-convex domains. 

\begin{theorem}[Inradius comparison for isoperimetric $\lambda$-convex domains]
\label{inradestthm}
Let $M$ be a two-dimensional $\CBB(\kappa)$ space, and $D \subset M$ be a closed topological disk of inradius $r$ and with the rectifiable boundary of length $L$. If $D$ is $\lambda$-convex, then
\begin{equation}
\label{inradest}
r \geqslant \rho_\lambda(L).
\end{equation}
Moreover, equality holds if and only if $D$ is isometric to a $\lambda$-convex lune in $M^2(\kappa)$.
\end{theorem}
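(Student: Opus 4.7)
The strategy is to prove the equivalent bound $L \leqslant L_\lambda(r)$, where $L_\lambda(r) := \rho_\lambda^{-1}(r)$ is the perimeter of the $\lambda$-convex lune of inradius $r$. Since $\rho_\lambda$ is monotonically increasing (a thicker lune has both larger perimeter and larger inradius), this perimeter bound is equivalent to the stated inradius bound $r \geqslant \rho_\lambda(L)$. Crucially, the direction of the comparison must be that each boundary sub-arc of $\partial D$ is bounded \emph{above} by the corresponding sub-arc of the comparison lune, not the reverse.

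The first step is to fix an incenter $o \in D$ with inscribed ball $B = B_r(o) \subset D$ and analyse the touching set $T = \partial D \cap \partial B$. By the maximality of $B$, the origin in $T_o M$ lies in the convex hull of the starting directions from $o$ to the points of $T$. In the principal scenario there exist two antipodal touching points $p_1, p_2 \in T$ (with $o$ on the geodesic $p_1 p_2$), which is automatic when $|T| = 2$ and in general is extracted from $T$. The geodesic segment $\sigma = p_1 o p_2$ then has length $2r$ and divides $D$ into two $\lambda$-convex sub-disks $D_1, D_2$, each bounded by $\sigma$ and a boundary arc $\gamma_i \subset \partial D$ that meets $\sigma$ orthogonally at both endpoints (by tangency of $\partial D$ and $\partial B$ at $p_1, p_2$).

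The heart of the argument is the following \emph{cap estimate}:
\[
s(\gamma_i) \;\leqslant\; \tfrac{1}{2}\, L_\lambda(r), \qquad i = 1, 2,
\]
which on summation gives $L = s(\gamma_1) + s(\gamma_2) \leqslant L_\lambda(r)$. I would prove it first in the model space $M^2(\kappa)$: among $\lambda$-convex arcs spanning a chord of length $2r$, orthogonal to the chord at both endpoints, and enclosing with the chord a region containing the half-ball of radius $r$, the maximum-length arc is one arc of the $\lambda$-convex lune of inradius $r$. This is established by combining a Gauss--Bonnet identity on the cap region with the $\lambda$-convexity inequality $\varphi(\widehat\gamma) \geqslant \lambda\, s(\widehat\gamma)$ and the explicit geometry of lune arcs; the lune configuration maximally concentrates swerve at the single corner of its half-boundary, leaving the smooth portions as long as possible subject to the endpoint constraints. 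For a general $\CBB(\kappa)$ space, the estimate is transferred via the developing/comparison apparatus from~\cite{BorNA}, which associates to any $\lambda$-convex arc in $M$ a $\lambda$-convex arc of the same length in $M^2(\kappa)$ enclosing a cap of inradius at most $r$; the model-space estimate then yields the bound.

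For the equality case, saturation forces each inequality above to be sharp, so each $\gamma_i$ develops isometrically onto a lune arc and each $D_i$ onto a half-lune; gluing across $\sigma$ realises $D$ as an isometric copy of a $\lambda$-convex lune, with the global rigidity upgraded by the corresponding rigidity statement in~\cite{BorNA}. The main obstacle is the cap estimate: proving the sharp extremality of the lune arc in the model space and transferring it faithfully to $\CBB(\kappa)$. A secondary obstacle is the reduction of non-antipodal touching configurations (such as Reuleaux-type domains with threefold touching, where no pair of touching points is antipodal through $o$) to the antipodal case. This is expected to require either an approximation argument within the class of $\lambda$-convex domains, exploiting $C^0$-stability of perimeter and inradius, or a finer partition of $D$ into sectors meeting at $o$ together with a sector-wise variant of the cap estimate.
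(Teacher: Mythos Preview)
Your route is genuinely different from the paper's. The paper never works from the incenter at all: it uses an intermediate-value argument to find a chord $p^*q^*$ that bisects the perimeter and satisfies a symmetry condition on the supporting directions, takes the \emph{midpoint} $m$ of that chord, point-reflects one half of $\partial D$ through $m$ to build a centrally symmetric $\lambda$-convex curve of length $L$, and then applies Blaschke's rolling theorem at a boundary point and its reflection to trap this curve in a $\lambda$-convex lune centred at $m$ of perimeter $L^*\geqslant L$. Monotonicity of $\rho_\lambda$ gives $|ms|\geqslant\rho_\lambda(L^*)\geqslant\rho_\lambda(L)$, hence $r\geqslant\rho_\lambda(L)$. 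The reduction from a general $\CBB(\kappa)$ space to $M^2(\kappa)$ is done by Borisenko's embedding theorem (your reference~\cite{BorNA}), which realises $D$ isometrically as a convex cap in $M^3(\kappa)$ over a planar $\lambda$-convex base with the same perimeter; the graph property gives $r\geqslant r^*$ and the model-space case finishes it.

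The real gap in your plan is the one you label ``secondary'': the existence of antipodal touching points. For the Reuleaux triangle (and any $\lambda$-convex domain sufficiently close to it) the incircle touches at three points whose directions are $2\pi/3$ apart, so no antipodal pair can be ``extracted from $T$''. Your two proposed escapes are both problematic. Approximation within the class of $\lambda$-convex domains does not help, since the non-antipodal configuration is open in that class. A sectorial decomposition would require an estimate of the form $s(\gamma_i)\leqslant(\theta_i/2\pi)L_\lambda(r)$ for sectors of angle $\theta_i\neq\pi$; but the extremal half-lune configuration that calibrates your cap estimate is intrinsically tied to $\theta=\pi$, and there is no obvious analogue for general $\theta$ (the extremiser for a single sector is not a piece of a lune of inradius $r$). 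The paper's chord-and-reflection construction is designed precisely to manufacture a symmetric situation without ever assuming anything about the touching set of the inball.

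Two smaller points. First, your cap estimate in the antipodal case is correct, but the cleanest proof is not Gauss--Bonnet: apply Blaschke rolling at $p_1$ and $p_2$ to get $D_1\subset F_\lambda(p_1)\cap F_\lambda(p_2)$, observe that this intersection is exactly the $\lambda$-convex lune of inradius $r$ with $\sigma$ as its short axis, and use perimeter monotonicity for nested convex sets. Second, your transfer to $\CBB(\kappa)$ via ``the developing/comparison apparatus from~\cite{BorNA}'' is too vague as stated; what that paper actually provides is the isometric cap embedding above, which compares inradii (not arc lengths) and then hands you a planar $\lambda$-convex domain to which the model-space argument applies.
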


Computing explicitly the value of $\rho_\lambda(L)$ in each of the model ambient spaces, as a consequence of Theorem~\ref{inradestthm} we obtain the following sharp estimates for indradii of $\lambda$-convex domains in $\CBB(\kappa)$ spaces.

\begin{theorem}[Inradius lower bounds for $\lambda$-convex domains]
\label{estthm}
Let $M$ be a two-dimen\-sio\-nal $\CBB(\kappa)$ space, and $D \subset M$ be a $\lambda$-convex closed topological disk of inradius $r$ and of boundary length $L$. Then
\begin{enumerate}
\item
for $\kappa = k^2$ ($k>0$),
\begin{equation}
\label{Eq:sphere}
r \geqslant \frac{1}{k} \left(\arctan \frac{k}{\lambda} - \arctan \left(\frac{k}{\lambda} \cos \frac{L \sqrt{\lambda^2 + k^2}}{4}\right)\right);
\end{equation}

\item
for $\kappa = 0$,
\begin{equation}
\label{Eq:eucl}
r \geqslant \frac{1}{\lambda} \left(1 - \cos \frac{L\lambda}{4}\right);
\end{equation}

\item
\label{Eq:hyp}
for $\kappa = -k^2$ ($k>0$), depending on the value of $\lambda$:
\begin{enumerate}
\item
if $\lambda > k$,
\begin{equation}
\label{Eq:(a)}
r \geqslant \frac{1}{k} \left(\arctanh \frac{k}{\lambda} - \arctanh \left(\frac{k}{\lambda} \cos \frac{L \sqrt{\lambda^2 - k^2}}{4}\right)\right);
\end{equation}

\item
if $\lambda = k$,
\begin{equation}
\label{Eq:(b)}
r \geqslant \frac{1}{2k} \log \left(1 + \frac{L^2k^2}{16}\right);
\end{equation}

\item
and finally, if $k > \lambda > 0$,
\begin{equation}
\label{Eq:(c)}
r \geqslant \frac{1}{2k} \log \frac{(k+\lambda)\left(\cosh^2 \frac{L\sqrt{k^2 - \lambda^2}}{4} - \lambda^2/k^2\right)}{ (k - \lambda) \left(\cosh \frac{L\sqrt{k^2 - \lambda^2}}{4} + 1\right)^2}.
\end{equation}

\end{enumerate}
\end{enumerate} 
Moreover, equality holds if and only if $D$ is isometric to a $\lambda$-convex lune in $M^2(\kappa)$.
\end{theorem}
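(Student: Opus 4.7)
The proof passes through Theorem~\ref{inradestthm}: once we know that $r \geqslant \rho_\lambda(L)$ with equality if and only if $D$ is isometric to a $\lambda$-convex lune, it remains only to evaluate $\rho_\lambda(L)$ --- the inradius of a $\lambda$-convex lune of boundary length $L$ --- in each of the five model-space subcases. The equality clause is then inherited. By the discussion following Definition~\ref{Def:LLune}, I realize the lune $\Lambda$ as the intersection $F_\lambda^1 \cap F_\lambda^2$ of two congruent copies of the closed convex region bounded by a complete curve of constant geodesic curvature $\lambda$. By central symmetry, the inball of $\Lambda$ is centered at the midpoint $m$ of the chord joining the two vertices of $\Lambda$, and the inradius equals the geodesic distance from $m$ to the midpoint $m_1$ of one of the two boundary arcs.

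For the subcases in which $F_\lambda$ is a geodesic disk --- that is, $\kappa \geqslant 0$, or $\kappa = -k^2$ with $\lambda > k$ --- let $O$ be the center of $F_\lambda^1$, let $d_0 = \dist(O, m_1)$ be its geodesic radius, and let $d_1 = \dist(O, m)$; then $r = d_0 - d_1$. The radius $d_0$ is determined by the identity between geodesic curvature and geodesic radius of a model-space circle, giving $\tan(kd_0) = k/\lambda$ for $\kappa = k^2$, $\lambda d_0 = 1$ for $\kappa = 0$, and $\tanh(kd_0) = k/\lambda$ for $\kappa = -k^2$. Letting $2\beta$ be the angle subtended at $O$ by the boundary arc $\partial F_\lambda^1 \cap \partial \Lambda$ of length $L/2$, the arc-length formula on $\partial F_\lambda^1$ recovers $\beta$ from $L$ as $L\sqrt{\lambda^2+k^2}/4$, $L\lambda/4$, and $L\sqrt{\lambda^2-k^2}/4$ respectively. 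Finally, $d_1$ is computed from the right-angled model-space triangle $\triangle Omv$ (where $v$ is a vertex of $\Lambda$, so the right angle is at $m$, the angle at $O$ is $\beta$, the hypotenuse has length $d_0$, and the adjacent leg has length $d_1$), via the appropriate analog of $\cos\beta = \tan(kd_1)/\tan(kd_0)$. Substitution and rearrangement then deliver the formulas (\ref{Eq:sphere}), (\ref{Eq:eucl}) and (\ref{Eq:(a)}).

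The remaining hyperbolic subcases, $\lambda = k$ and $0 < \lambda < k$, are structurally different because $F_\lambda$ is unbounded --- a horoball in case (b), and the convex side of a hypercycle in case (c) --- so there is no finite center $O$ and the right-triangle computation above does not transfer directly. The quickest route is to observe that the right-hand side of (\ref{Eq:(a)}) is analytic in $\lambda$ near $\lambda = k$: taking the limit $\lambda \to k^+$ (via the asymptotics $\arctanh(1-\epsilon) = \tfrac{1}{2}\log(2/\epsilon) + O(\epsilon)$ as $\epsilon \to 0^+$) yields (\ref{Eq:(b)}), while the substitution $\sqrt{\lambda^2-k^2} = i\sqrt{k^2-\lambda^2}$ together with rewriting $\arctanh$ in logarithmic form converts (\ref{Eq:(a)}) into (\ref{Eq:(c)}). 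Alternatively, and more intrinsically, one may set up the picture directly in the Poincar\'e disk: in (b) each horocycle is a Euclidean circle tangent to the ideal boundary, in (c) each hypercycle is a Euclidean circular arc meeting the ideal boundary at the fixed angle $\arccos(\lambda/k)$, and in either case hyperbolic trigonometry applied to the symmetric picture yields the bound. I expect the main technical obstacle to be in case (c): collapsing the intermediate expression into the specific closed form featuring $(\cosh^2(\cdot) - \lambda^2/k^2)/(\cosh(\cdot)+1)^2$ inside the logarithm in (\ref{Eq:(c)}) requires a nontrivial algebraic simplification that makes the verification noticeably more delicate than in the other four subcases.
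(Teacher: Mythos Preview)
Your approach is exactly the paper's: deduce the theorem from Theorem~\ref{inradestthm} by identifying the right-hand sides with $\rho_\lambda(L)$. The paper packages that identification as Lemma~\ref{convlem} and omits the computation entirely (citing \cite{DrPhD}); you supply it, and your treatment of the disk subcases (\ref{Eq:sphere}), (\ref{Eq:eucl}), (\ref{Eq:(a)}) via the right triangle $\triangle Omv$ is correct.

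One caution on the shortcut for (\ref{Eq:(c)}). Substituting $\sqrt{\lambda^2-k^2}=i\sqrt{k^2-\lambda^2}$ into the right-hand side of (\ref{Eq:(a)}) certainly produces a real-valued expression, but that alone does not show this expression equals $\rho_\lambda(L)$ for $0<\lambda<k$: you have analytically continued the \emph{formula}, not the \emph{function}. To close the argument this way you must also argue that $\lambda\mapsto\rho_\lambda(L)$ is itself real-analytic across $\lambda=k$ (plausible from the smooth dependence of the lune on $\lambda$, but not stated), so that the identity theorem applies. The limit $\lambda\to k^+$ for (\ref{Eq:(b)}) is on firmer ground, needing only continuity of $\rho_\lambda(L)$ in $\lambda$. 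Your alternative --- a direct hyperbolic-trigonometric computation for the horocycle and hypercycle lunes --- avoids this issue and is what a complete write-up should contain; as you anticipate, collapsing the case-(c) answer into the specific form in (\ref{Eq:(c)}) is the only genuinely tedious step.
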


\begin{remark}
It is straightforward to check that in the hyperbolic case ($\kappa < 0$) the right-hand sides of inequalities (\ref{Eq:(a)})--(\ref{Eq:(c)}) have the `phase transition' behavior: for a fixed $k$, as $\lambda \to k+0$ the right side of (\ref{Eq:(a)}) tends to the right side of (\ref{Eq:(b)}), and as $\lambda \to k-0$ the right side of (\ref{Eq:(c)}) tends to the expression on the right side of (\ref{Eq:(b)}). Similarly, for a fixed $\lambda$, the right-hand side expression in (\ref{Eq:sphere}) tends to the right side in (\ref{Eq:eucl}) as $k \to 0+0$; the right side of (\ref{Eq:(a)}) tends to the right side expression in (\ref{Eq:eucl}) as $k \to 0-0$.
\end{remark}

\section*{Proof of the main results}

Let us start with stating some known results and proving some auxiliary lemmas all of which will be later used in the proof of the main result (Theorem~\ref{inradestthm}). 

In~\cite{BorNA}, Borisenko, combining celebrated results of Alexandrov \cite[p.\,269, p.\,318]{Al48} on gluing and isometric embedding of $\CBB(\kappa)$ spaces with Pogorelov's \cite[pp.\,119-167, p.\,267, pp.\,320-321]{Pog73} and Milka's \cite{Mil80} uniqueness results for such embeddings, proved the following theorem (which was not stated as such, but can be easily extracted from the proof of the main result of the paper):

\begin{theorem}[\cite{BorNA}]
\label{BorCBB}
Let $M^2$ be a two-dimensional $\CBB(\kappa)$ space, and $D \subset M^2$ be a $\lambda$-convex closed topological disk. Then there exists a non-closed convex surface $D_\kappa$ embedded into $M^3(\kappa)$ such that:
\begin{enumerate}
\item[1)]
$D_\kappa$ is isometric to $D$;

\item[2)]
the boundary $\partial D_\kappa$ is a closed curve lying in a two-dimensional totally geodesic subspace $\pi \simeq M^2(\kappa)$;

\item[3)]
$\partial D_\kappa$ is a $\lambda$-convex curve as a curve in $\pi$;

\item[4)]
$D_\kappa$ is a graph over $\pi$.\qed
\end{enumerate}
\end{theorem}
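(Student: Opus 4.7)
My plan is to construct $D_\kappa$ by first doubling $D$ into a closed 2-sphere carrying a $\CBB(\kappa)$ metric, then isometrically realizing that sphere as a closed convex surface in $M^3(\kappa)$, and finally cutting it in half along a reflection plane forced by the intrinsic symmetry of the doubling.

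First I would form the metric double $\tilde D = D^+ \cup_{\partial D} D^-$ by gluing two isometric copies of $D$ along their common boundary. Since $D$ is $\lambda$-convex with $\lambda > 0$, every subarc of $\partial D$ has strictly positive swerve in $D$, so Alexandrov's gluing theorem (\cite[p.\,318]{Al48}) endows $\tilde D$ with the structure of a $\CBB(\kappa)$ 2-sphere. The canonical involution $\iota \colon \tilde D \to \tilde D$ that swaps $D^\pm$ and fixes $\partial D$ pointwise is an intrinsic isometry of $\tilde D$. By Alexandrov's isometric embedding theorem (\cite[p.\,269]{Al48}), $\tilde D$ can be realized isometrically as a closed convex surface $\tilde D_\kappa \subset M^3(\kappa)$ bounding a convex body $\tilde B_\kappa$, via some embedding $f$. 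The Pogorelov--Milka rigidity theorems (\cite[pp.\,119-167, pp.\,320-321]{Pog73}, \cite{Mil80}) then assert that $f$ is unique up to an ambient isometry of $M^3(\kappa)$; applied to the two isometric embeddings $f$ and $f \circ \iota$ of $\tilde D$, this produces a nontrivial ambient involution $\bar\iota$ of $M^3(\kappa)$ that preserves $\tilde D_\kappa$ setwise and fixes $f(\partial D)$ pointwise. Since $f(\partial D)$ inherits positive one-sided swerve from $\partial D$, it cannot be a geodesic of $M^3(\kappa)$; consequently the fixed set of $\bar\iota$ contains a non-geodesic simple closed curve, forcing $\bar\iota$ to be the reflection in a totally geodesic 2-plane $\pi \simeq M^2(\kappa)$, with $f(\partial D) \subset \pi$ and $\tilde D_\kappa$ symmetric about $\pi$.

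Setting $D_\kappa := f(D^+)$ yields properties (1) and (2) immediately. The reflection symmetry splits $\tilde D_\kappa$ into two halves, one of which is $D_\kappa$, so $D_\kappa$ lies in a closed halfspace bounded by $\pi$. The convex set $\tilde B_\kappa \cap \pi$ is a convex planar region $\Omega \subset \pi$ with $\partial \Omega = f(\partial D)$, and each geodesic of $M^3(\kappa)$ perpendicular to $\pi$ issued from an interior point of $\Omega$ meets $D_\kappa$ in exactly one point by convexity of $\tilde B_\kappa$, yielding the graph property (4). Property (3) is obtained by transferring the supporting-arc characterization of $\lambda$-convexity from $D$ to $\Omega$: at each boundary point, a $\lambda$-arc in $M^2$ locally supporting $D$ produces, via the isometric embedding and the reflection symmetry across $\pi$, a $\lambda$-arc in $\pi$ locally supporting $\Omega$; hence $\partial D_\kappa = \partial \Omega$ is $\lambda$-convex in $\pi$.

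The main obstacle I expect is the rigidity step: one must verify that the closed convex surface supplied by Alexandrov's theorem is in the regularity class covered by the Pogorelov--Milka uniqueness theorems, and carefully handle the spherical case to ensure that $\bar\iota$ is forced to be a planar reflection (ruling out, for example, the possibility that the fixed set is a great circle alone). A secondary subtlety is the justification of property (3), which requires transferring the local supporting arcs from the intrinsic geometry of $D$ to the ambient geometry of $\pi$; for this one leverages both the graph property (4) and the convexity of $\tilde B_\kappa \cap \pi$.
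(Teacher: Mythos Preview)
Your proposal is correct and follows precisely the approach the paper itself attributes to Borisenko: the paper does not supply its own proof of this theorem (note the \qed after the statement), but in the sentence introducing it explicitly describes the method as ``combining celebrated results of Alexandrov on gluing and isometric embedding of $\CBB(\kappa)$ spaces with Pogorelov's and Milka's uniqueness results for such embeddings,'' which is exactly your doubling--embedding--rigidity--reflection scheme. The caveats you flag (regularity needed for Pogorelov--Milka rigidity, and the transfer of $\lambda$-convexity to the planar shadow) are genuine technical points that would have to be checked in \cite{BorNA}, but the architecture of your argument matches what the paper reports.
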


This theorem can be viewed as an extrinsic analog to celebrated Reshetnyak's Majorization Theorem in CAT($\kappa$) spaces (see~\cite{R68} and~\cite[Chapter 9]{BBI01}).

One of the classical results on the geometry of $\lambda$-convex domains is the following theorem, originally due to Blaschke \cite{Bla56} who proved it for smooth curves and surfaces in the Euclidean space. 

\begin{theorem}[Blaschke's Rolling Theorem, \cite{Bla56, Mi70}]
\label{Thm:BlaRoll}
If $D \subset M^2(\kappa)$ is a $\lambda$-convex domain, then 
\begin{equation*}
D \subseteq F_\lambda(s)
\end{equation*}
for every boundary point $s \in \partial D$; here $F_\lambda(s)$ is a closed convex set in $M^2(\kappa)$ such that its boundary curve is smooth, has constant curvature $\lambda$ and is locally supporting to $\partial D$ at $s$. \qed
\end{theorem}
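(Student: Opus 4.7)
The plan is to argue by contradiction using a curvature comparison for plane curves in the model space $M^2(\kappa)$. Suppose $D \not\subseteq F_\lambda(s_0)$ for some $s_0 \in \partial D$. Since $F_\lambda(s_0)$ is locally supporting to $\partial D$ at $s_0$, the boundary $\partial D$ enters the interior of $F_\lambda(s_0)$ immediately on leaving $s_0$. If $\partial D$ nevertheless escapes $F_\lambda(s_0)$ somewhere, then there is a first subarc $\alpha \subset \partial D$ starting at $s_0$ that exits $F_\lambda(s_0)$: $\alpha$ begins tangent to $\partial F_\lambda(s_0)$ at $s_0$ from the inside and terminates on $\partial F_\lambda(s_0)$ at some point $p \neq s_0$.

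The crux of the argument would be a \emph{tangent-angle comparison} for $\alpha$ and the corresponding arc $\beta$ of $\partial F_\lambda(s_0)$ from $s_0$ to $p$. By condition~\eqref{Eq:swerve}, the cumulative swerve of $\alpha$ along $[0, s]$ is at least $\lambda s$ for every $s \in [0, s(\alpha)]$, while $\beta$ has cumulative swerve exactly $\lambda s$ along $[0, s]$. Expressing both curves by their tangent-angle functions (in the Euclidean case, $\gamma(s) = s_0 + \int_0^s (\cos\theta(u), \sin\theta(u))\,du$; in the spherical/hyperbolic cases, analogous formulas using the corresponding trigonometry), the pointwise domination $\theta_\alpha(s) \geq \theta_\beta(s)$ implies that $\alpha$ winds strictly more tightly into $F_\lambda(s_0)$ than $\beta$ does. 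A direct calculation then shows that $\alpha$ cannot return to $\partial F_\lambda(s_0)$ from the interior before making at least a full turn, contradicting the assumption that $\alpha$ first meets $\partial F_\lambda(s_0)$ again at a point $p$ after a short arc. Hence $\partial D \subseteq F_\lambda(s_0)$, and since $D$ is convex (because $\lambda>0$) and $F_\lambda(s_0)$ is convex, we conclude $D \subseteq F_\lambda(s_0)$, the desired contradiction.

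The main obstacle is justifying the tangent-angle comparison in the non-smooth setting, since $\lambda$-convexity admits corners on $\partial D$ (and $s_0$ may itself be such a corner, in which case $F_\lambda(s_0)$ is locally supporting to a tangent line of the tangent cone at $s_0$). This is handled by approximating $\partial D$ by inscribed broken geodesics, exactly the polygonal approximation used to define the swerve in the paper: for such broken geodesics the comparison reduces to a finite sequence of turning-angle inequalities that can be verified directly (corner contributions add to the integrated curvature in exactly the way $\lambda$-convexity requires), and the general statement follows by a limiting argument using the lower semicontinuity of swerve under this approximation.
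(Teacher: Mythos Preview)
The paper does not prove this statement at all: Theorem~\ref{Thm:BlaRoll} is cited from the literature (Blaschke and Milka) and stated with a terminal \qed, so there is no ``paper's own proof'' against which to compare your proposal. Your task here was therefore unnecessary.

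That said, since you wrote a sketch, a brief assessment: the overall strategy---assume $\partial D$ escapes $F_\lambda(s_0)$, isolate a first escaping subarc, and compare its integrated turning to that of the constant-curvature arc---is indeed the classical route. But as written there are real gaps. The assertion that ``$\partial D$ enters the interior of $F_\lambda(s_0)$ immediately on leaving $s_0$'' is false in general: $\partial D$ may coincide with $\partial F_\lambda(s_0)$ on an arc before separating, so the first exit point and the comparison arc must be set up more carefully. More seriously, the implication ``$\theta_\alpha(s) \geq \theta_\beta(s)$ for all $s$ $\Rightarrow$ $\alpha$ cannot re-meet $\partial F_\lambda(s_0)$ from inside'' is exactly the nontrivial Schur--Schmidt-type comparison that Milka's paper \cite{Mi70} establishes; you have asserted it (``a direct calculation then shows\ldots'') rather than proved it. Pointwise tangent-angle domination alone does not control the chord or the position of the endpoint without an additional convexity or monotonicity argument, and this is where the real work lies in each of the three model geometries. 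If you want a self-contained proof, that step must be filled in, not just alluded to.
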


\begin{remark}
As it follows from Theorem~\ref{Thm:BlaRoll} and the properties of $F_\lambda(s)$ (see the discussion after Definition~\ref{Def:LLune}), for $\kappa \geqslant 0$, or for $\kappa < 0$ and $\lambda > \sqrt{-\kappa}$ the domain $D$ is necessarily compact, while in the hyperbolic case ($\kappa < 0$) for $\lambda \in \left(0, \sqrt{-\kappa}\right]$ it may be unbounded. 
\end{remark}

For the proof of Theorem~\ref{inradestthm} we will need two technical lemmas. The first one will help us to reduce some of the later constructions to a more symmetric setup. In order to state the lemma, we need the following notation.

Suppose $D \subset M^2(\kappa)$ is a closed strictly convex set, and $\gamma$ is its boundary curve. Fix a point $o$ in the interior of $D$. For a unit direction $u$ in the tangent space $T_o M^2(\kappa)$ let $\sigma^+_u$ be the geodesic ray emanating from $o$ in the direction of $u$, and let $\sigma_u$ be the complete geodesic containing $\sigma^+_u$. Since $D$ is strictly convex, for every $u$ there exists a pair of geodesics supporting to $\gamma$ and perpendicular to $\sigma_u$. Among those there exists a unique geodesic $l(u)$ such that $\sigma^+_u$ points in the halfspace (with respect to $l(u)$) that does not intersect the interior of $D$. Conversely, every supporting geodesic to $\gamma$ defines a unique direction $u$ in the tangent space $T_o M^2(\kappa)$. Again by convexity of $D$, the mapping $\mathbb S^1 \ni u \mapsto l(u)$ is continuous (here we identify $\mathbb S^1$ and the set $T_o^1 M^2(\kappa)$ of all unit directions in $T_o M^2(\kappa)$).

For a given point $x \in \gamma$, let $u_x$ be a unit direction in $T_o M^2(\kappa)$ such that $l(u_x)$ is supporting to $\gamma$ at $x$. Note that if $\gamma$ has a unique tangent at $x$, then $u_x$ is uniquely defined by $x$; otherwise, there is a cone of directions. If there is a unique tangent, to simplify notation we will write $l_x := l(u_x)$. 

Let us pick a pair of points $x, y \in \gamma$ and a pair of directions $u_x, u_y \in \mathbb S^1 = T^1_o M^2(\kappa)$. The geodesic segment $xy$ cuts $D$ into two convex subregions, say $D_1$ and $D_2$. Denote $\alpha(u_x)$ (respectively, $\alpha(u_y)$) the angle between the geodesics $xy$ and $l(u_x)$ (resp.\,$xy$ and $l(u_y)$) measured from the side of $D_1$ (resp.\,$D_2$). Similarly, denote $\beta(u_y)$, respectively $\beta(u_x)$, the angle between $xy$ and $l(u_y)$, resp.\,$xy$ and $l(u_x)$, measured from the side of $D_1$, resp.\,$D_2$. By construction,
\begin{equation}
\label{Eq:InitAngleCond}
\alpha(u_x) + \beta(u_x) = \alpha(u_y) + \beta(u_y) = \pi.
\end{equation}

In the introduced notation the following lemma holds true.

\begin{lemma}
\label{ivthm}
For every closed strictly convex set $D \subset M^2(\kappa)$ and every point $o$ in the interior of $D$ there exist a pair of points $p^*, q^* \in \partial D$ and a pair of directions $u_{p^*}, u_{q^*} \in \mathbb S^1 = T_o^1 M^2(\kappa)$ such that $p^*$ and $q^*$ subdivide $\partial D$ into two arcs of equal length and 
\begin{equation}
\label{Eq:AngleCond}
\alpha(u_{p^*})+ \beta(u_{q^*}) = \alpha(u_{q^*}) + \beta(u_{p^*}) = \pi.
\end{equation}
\end{lemma}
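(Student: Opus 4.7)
First, using the paper's identity~(\ref{Eq:InitAngleCond}) at each endpoint, I would observe that the pair of equations~(\ref{Eq:AngleCond}) collapses to the single condition
\[
\alpha(u_{p^*}) = \alpha(u_{q^*}),
\]
recalling that by the paper's convention $\alpha(u_{p^*})$ is measured on the $D_1$-side at $p^*$ while $\alpha(u_{q^*})$ is measured on the $D_2$-side at $q^*$. So I will look for a chord that halves the arc length of $\gamma = \partial D$ and satisfies this single equal-angle condition, via a one-parameter intermediate value argument.

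\textbf{Setup.} Parametrize $\gamma$ by arc length $s \in \mathbb{R}/L\mathbb{Z}$ and set $p(s) := \gamma(s)$, $q(s) := \gamma(s + L/2)$; the equal-arc condition is then automatic for every $s$. At each boundary point, strict convexity of $D$ produces a closed (possibly singleton) arc $U(s) \subset \mathbb{S}^1$ of supporting directions. After picking a selection $u(s) \in U(s)$, I would define
\[
F(s) := \alpha(u(s)) - \alpha(u(s+L/2)),
\]
with the first term evaluated at $p(s)$ on the $D_1$-side and the second at $q(s)$ on the $D_2$-side of the chord $p(s)q(s)$. A zero of $F$ is precisely the configuration required.

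\textbf{Antisymmetry and IVT.} The key computation is the sign flip $F(s+L/2) = -F(s)$. Shifting $s$ by $L/2$ exchanges $p$ with $q$ and reverses the orientation of the chord; by the paper's convention this forces the labels $D_1$ and $D_2$ to swap as well, so tracking the definition shows that the two summands of $F(s+L/2)$ are exactly the two summands of $F(s)$ in reversed order. In the smooth case $F$ is continuous, and the intermediate value theorem on $[0, L/2]$ produces $s^*$ with $F(s^*) = 0$; then $(p(s^*), q(s^*), u(s^*), u(s^*+L/2))$ is the desired quadruple by the reduction above.

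\textbf{Main obstacle.} The selection $u(s) \in U(s)$ need not be continuous at corners of $\partial D$, so $F$ is only multi-valued/upper-semicontinuous a priori, and this is what I expect to be the technical heart of the argument. The natural remedy is to lift the problem to the normal cycle
\[
\widetilde{\gamma} := \{(s, u) : s \in \mathbb{R}/L\mathbb{Z},\ u \in U(s)\},
\]
which is a topological circle by strict convexity, and to work on the antipodal-pair curve
\[
A := \{((s, u), (s+L/2, v)) : u \in U(s),\ v \in U(s+L/2)\}.
\]
On $A$ the function $F$ is single-valued and continuous, $A$ is connected, and the coordinate-swap involution of $A$ flips the sign of $F$; so IVT along any path in $A$ from a point to its involution image delivers the required zero.
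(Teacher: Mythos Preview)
Your argument is essentially the paper's: both reduce~(\ref{Eq:AngleCond}) to a single scalar equation via~(\ref{Eq:InitAngleCond}), build the same antisymmetric function (your $F$ equals the paper's $g$ after noting $g(x)=\alpha_x+\beta_{f(x)}-\pi=\alpha_x-\alpha_{f(x)}$), and invoke the intermediate value theorem using the sign flip under the half-perimeter shift. The only divergence is in handling corners---the paper disposes of the non-smooth case by one-line approximation with smooth convex sets, while you lift to the normal cycle and argue directly; both are standard, and your route trades the limiting step for the (routine) verification that $A$ is path-connected.
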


\begin{proof}

The claim of the lemma seems to be folklore and follows from the intermediate value theorem by a standard trick. Since we were unable to locate a precise reference, we outline the proof here. 

First observe that because of (\ref{Eq:InitAngleCond}), if we satisfy $\alpha(u_{p^*}) + \beta(u_{q^*}) = \pi$, we automatically satisfy $\alpha(u_{q^*}) + \beta(u_{p^*}) = \pi$, and hence (\ref{Eq:AngleCond}). 

As usual, write $\gamma = \partial D$. We will prove Lemma~\ref{ivthm} for the case when $\gamma$ has a unique tangent geodesic at every point. The general case will then follow by approximation of convex sets by convex sets with smooth boundary. 

If $\gamma$ has the tangent geodesic at every point, then the map $x \mapsto u_x$ is well-defined and bijective. Therefore, the continuous mapping $\mathbb S^1 \ni u \mapsto l(u)$ descends to the continuous mapping $\gamma \ni x \mapsto l(u_x) =: l_x$.

Now let $x \in \gamma$ be a point. Define $f(x) \in \gamma$ to be the point such that $x$ and $f(x)$ subdivide $\gamma$ into two arcs of equal length. Clearly, the map $\gamma \ni x \mapsto f(x)$ is continuous. Let $\alpha_x := \alpha(u_x)$, $\alpha_{f(x)} := \alpha(u_{f(x)})$ (similarly define $\beta_x$ and $\beta_{f(x)}$) be the angles defined for the geodesic segment $x f(x)$ and the tangent geodesics $l_x$ and $l_{f(x)}$ (see the paragraph before Lemma~\ref{ivthm}). Because the assignments $\gamma \ni x \mapsto l_x$ and $\gamma \ni x \mapsto f(x)$ are both continuous, the map $\gamma \ni x \mapsto \alpha_x + \beta_{f(x)} - \pi$ is also continuous. Call this map $g$. Observe that $f$ is an involution, i.e. $f(f(x)) = x$; together with (\ref{Eq:InitAngleCond}) this implies
\begin{equation}
\label{Eq:gprop}
g\left(f(x)\right) = - g(x) \text{ for all }x \in \gamma.
\end{equation}

Pick a point $x_0 \in \gamma$. If $g(x_0) = 0$, then $p^* = x_0$ and $q^* = f(x_0)$ and we are done. If not, then without loss of generality assume $g(x_0) > 0$. But then $g(f(x_0)) < 0$ by (\ref{Eq:gprop}). Hence, since $g$ is continuous on an arc of $\gamma$ between the points $x_0$ and $f(x_0)$, by the intermediate value theorem there exists a point $p^*$ such that $g(p^*) = 0$. Then $p^*$ and $q^* = f(p^*)$ are the required points.
\end{proof}

Recall that $\rho_\lambda(L)$ is the inradius of a $\lambda$-convex lune of length $L$ (see the discussion before the statement of Theorem~\ref{inradestthm}). The following lemma describes some analytic properties of the function $\rho_\lambda$.

\begin{lemma}
\label{convlem}
The function $I_\lambda \ni L \mapsto \rho_\lambda(L)$ is smooth in the interior of $I_\lambda$, strictly increasing on $I_\lambda$, and for a given $L$ the value $\rho_\lambda(L)$ is equal to the corresponding right-hand sides in inequalities (\ref{Eq:sphere})--(\ref{Eq:(c)}).
\end{lemma}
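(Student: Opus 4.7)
My plan is to compute $\rho_\lambda(L)$ in closed form by exploiting the central symmetry of a $\lambda$-convex lune, after which smoothness on the interior of $I_\lambda$ and strict monotonicity follow by inspection of the resulting formulas (\ref{Eq:sphere})--(\ref{Eq:(c)}). First, observe that a $\lambda$-convex lune $\Omega \subset M^2(\kappa)$ has two perpendicular geodesic axes of mirror symmetry: the one through its two vertices $V, V'$ and the one through the midpoints $M, M'$ of its two boundary arcs; their intersection is the center $O$. By these symmetries the inscribed ball is centered at $O$ with radius $r := |OM|$, and $|OV| \ge r$ with equality only when $\Omega$ degenerates to the full set $F_\lambda$. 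So $\rho_\lambda(L) = |OM|$, and it remains to express $|OM|$ as an explicit function of $L$.

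In the three `compact' regimes $\kappa \ge 0$ and $(\kappa = -k^2,\ \lambda > k)$, each arc of $\partial \Omega$ lies on a metric circle of radius $R$ obtained by inverting the standard curvature-of-sphere formula: $R = \frac{1}{k}\arctan(k/\lambda)$, $R = 1/\lambda$, and $R = \frac{1}{k}\arctanh(k/\lambda)$ respectively. With $O'$ the center of the circle carrying one arc and $\alpha$ its central angle, $O$ lies on the geodesic $O'M$ with $|O'O| = R - r$, and $\angle O'OV = \pi/2$. The standard right-triangle identity in the corresponding model geometry gives
\[
\mathrm{T}_\kappa(R - r) \;=\; \cos(\alpha/2) \cdot \mathrm{T}_\kappa(R),
\]
where $\mathrm{T}_\kappa$ denotes $\tan(k\,\cdot)$, the identity, or $\tanh(k\,\cdot)$ respectively. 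The circumference relation $L/2 = \alpha \, \mathrm{S}_\kappa(R)$, with $\mathrm{S}_\kappa(R)$ equal to $\sin(kR)/k$, $R$, or $\sinh(kR)/k$, then reduces after substituting the known value of $R$ to $\alpha/2 = L\sqrt{\lambda^2+k^2}/4$, $L\lambda/4$, $L\sqrt{\lambda^2-k^2}/4$ respectively. Plugging back gives (\ref{Eq:sphere}), (\ref{Eq:eucl}), and (\ref{Eq:(a)}) verbatim.

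In the two remaining hyperbolic subcases $\lambda = k$ and $0 < \lambda < k$ the boundary arcs are horocycles or hypercycles, and the above right triangle degenerates. I would instead work directly in the Poincar\'e disk model of $\mathbb H^2(-k^2)$: place $O$ at the origin and the axis $MM'$ along the vertical diameter, so that the two boundary curves become two Euclidean circles symmetric about the origin (tangent to the unit circle for horocycles, crossing it for hypercycles). Parametrizing each by Euclidean angle and integrating the hyperbolic length element expresses $L/2$ as an elementary function of a single Euclidean radius parameter, while $|OM|$ is read off from the standard formula for the hyperbolic distance from the origin to a point on the vertical diameter. Eliminating the parameter yields (\ref{Eq:(b)}) via the algebraic identity $1 + L^2 k^2/16 = \bigl((1+t)/(1-t)\bigr)^2$ with $t = 2r_0 - 1$ in the horocyclic case, and, after analogous but lengthier bookkeeping, (\ref{Eq:(c)}) in the hypercyclic case.

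Once the explicit formulas are in hand, smoothness on the interior of $I_\lambda$ is immediate, and strict monotonicity follows either by direct differentiation (the derivative is a product of manifestly positive factors on $I_\lambda$) or geometrically, from the fact that as $L$ grows the two supporting curves of the lune move closer together, strictly decreasing the off-center distance and hence strictly increasing $|OM|$. The main obstacle I anticipate is the hypercyclic case (\ref{Eq:(c)}): the right-triangle shortcut is unavailable and the algebraic simplification into the precise form stated requires careful bookkeeping in the Poincar\'e model.
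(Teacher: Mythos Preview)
Your proposal is correct and follows the same approach as the paper---a direct geometric computation of $\rho_\lambda(L)$ followed by differentiation to check monotonicity---but in fact supplies far more detail: the paper's own proof consists solely of the remarks that the explicit formula ``can be obtained by a direct and straightforward computation, and thus omitted'' (with a reference to \cite[Lemma~4.1]{DrPhD}) and that monotonicity follows from $d\rho_\lambda/dL>0$, also omitted. Your right-triangle argument in the circular regimes and the Poincar\'e-disk parametrization for the horocyclic and hypercyclic cases are precisely the kind of computation the paper has in mind.
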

\begin{proof}
The explicit formula for $\rho_\lambda(L)$ can be obtained by a direct and straightforward computation, and thus omitted (see \cite[Lemma 4.1]{DrPhD} for a similar computation). The monotonicity property is easy to establish by checking $d \rho_\lambda / d L (L)> 0$ for all values of $L$ in the interior of $I_\lambda$; this simple computation is also omitted.  
\end{proof}

\begin{proof}[Proof of Theorem~\ref{inradestthm}]

A) We start by proving the result assuming $D$ lies in the constant curvature space $M^2(\kappa)$. 

For simplicity, write $\gamma := \partial D$. Let $p^*, q^* \in \gamma$ be a pair of points given by Lemma \ref{ivthm}. In particular, $p^*$ and $q^*$ subdivide $\gamma$ into two arcs, say $\gamma_1$ and $\gamma_2$, of length $L/2$.

Suppose $m$ is the midpoint of the geodesic segment $p^*q^*$.  Let us show that
\begin{equation}
\label{goal1}
|ms| \geqslant \rho_\lambda(L) \quad \text{for every} \quad s \in \gamma.
\end{equation}
(Here $|ms|=|ms|_M$, and we will drop the index when it is clear in which space we measure the distance.)
Since the inradius is the radius of a largest ball contained in $D$, inequality~(\ref{goal1}) will yield~(\ref{inradest}).

Without loss of generality assume $s \in \gamma_1$. Let $R_m \colon M^2(\kappa) \to M^2(\kappa)$ be the point reflection in the point $m$. In other words, $R_m$ is an isometry of $M^2(\kappa)$ such that for every point $x \in M^2(\kappa)$ the point $m$ is the midpoint of the geodesic segment $x R_m(x)$. Let $\gamma_1' := R_m(\gamma_1)$ be the image of $\gamma_1$ under $R_m$, and $s' := R_m(s)$ be the image of $s$. By the choice of the segment $p^* q^*$ (see (\ref{Eq:AngleCond}) in Lemma~\ref{ivthm}), the curve $\Gamma_1 := \gamma_1 \cup \gamma_1'$ is convex and encloses a $\lambda$-convex domain (see Figure~\ref{Fig:Reflection}). By construction, the length of $\Gamma_1$ equals twice the length of $\gamma_1$, and hence is equal to $L$.

\begin{figure}[ht]
\begin{center}
\includegraphics[scale=.6, trim=30 10 10 10]{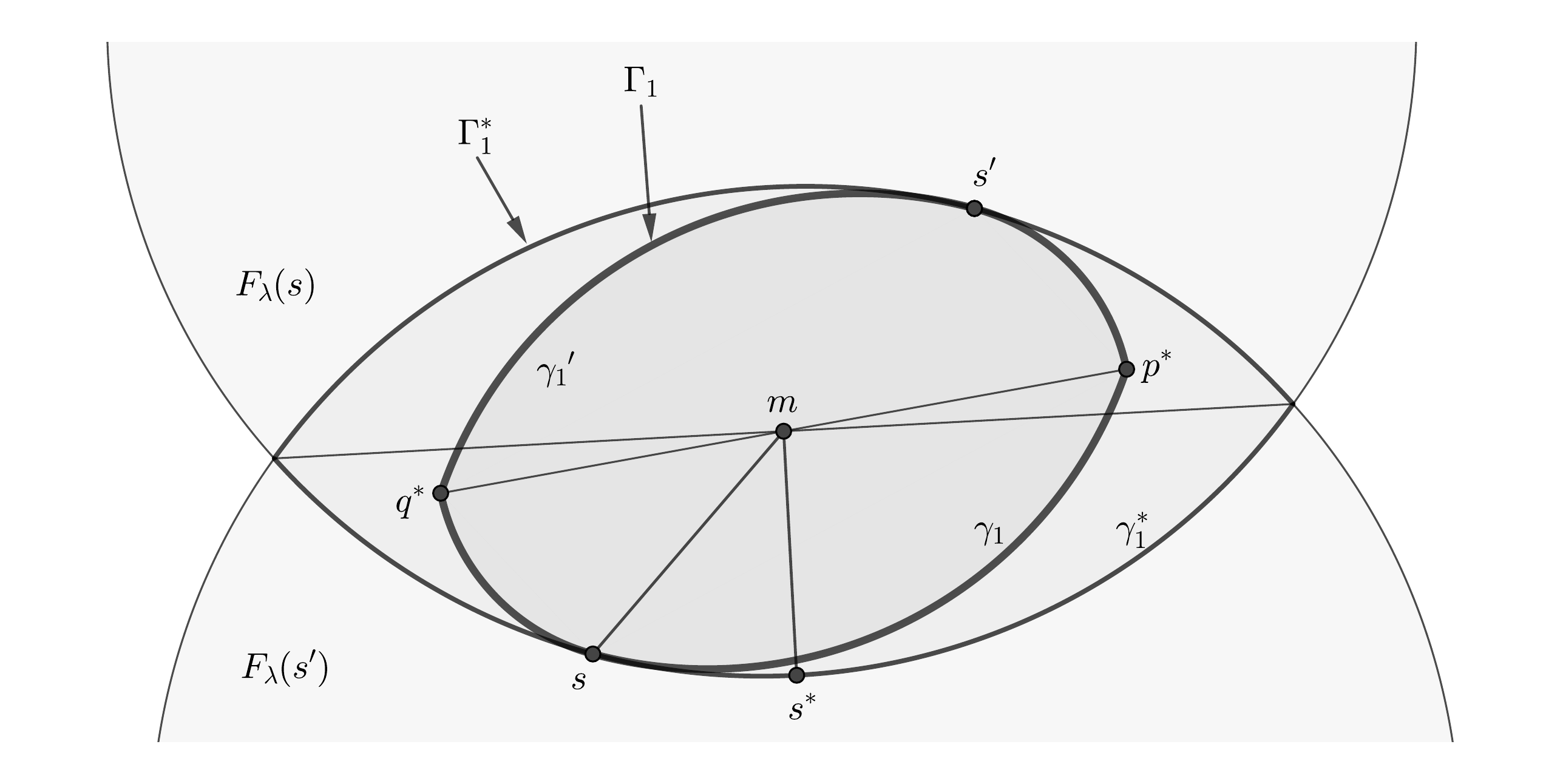}
\caption{The construction in the proof of Theorem~\ref{inradestthm} for the case when the ambient metric space is one of the constant curvature spaces. The subarc $\gamma_1$ of the curve $\gamma$ cut by the geodesic $p^*q^*$ given by Lemma~\ref{ivthm}. The curve $\gamma_1'$ is the image under the point reflection of $\gamma_1$ in $m$, where $m$ is the midpoint of the geodesic segment $p^*q^*$. The convex region bounded by the curve $\Gamma_1 = \gamma_1 \cup \gamma_1'$ is contained in the $\lambda$-convex lune $F_\lambda(s) \cap F_\lambda(s')$; here $\Gamma_1^*$ is the boundary curve of this lune.}
\label{Fig:Reflection}
\end{center}
\end{figure}

By Blaschke's Rolling Theorem (Theorem~\ref{Thm:BlaRoll}), the region bounded by $\Gamma_1$ is simultaneously contained in both $F_\lambda(s)$ and $F_\lambda(s')$ (see Theorem~\ref{Thm:BlaRoll} for the definition of sets $F_\lambda(s)$ and $F_\lambda(s')$). Therefore, $\Gamma_1$ is contained in the $\lambda$-convex lune $F_\lambda(s) \cap F_\lambda(s')$. Moreover, since $\Gamma_1$ is centrally symmetric with respect to $m$, we conclude $R_m\left(F_\lambda(s)\right) = F_\lambda(s')$, and hence $m$ is the center of the lune $F_\lambda(s) \cap F_\lambda(s')$. 

Write $\Gamma_1^*$ for the boundary of $F_\lambda(s) \cap F_\lambda(s')$. Suppose the length of $\Gamma_1^*$ is equal to $L^*$, then from the inclusion $\Gamma_1 \subset F_\lambda(s) \cap F_\lambda(s')$ we obtain
\[
L^\ast \geqslant L.
\]

Among the two curves of constant geodesic curvature composing $\Gamma_1^*$ consider the one, say $\gamma_1^*$, containing $s$, and let $s^*$ be the midpoint of $\gamma_1^*$ with respect to the arc length parameter on it. Observe that the geodesic $s^* m$ is the symmetry axis of $\Gamma_1^*$ (see Figure~\ref{Fig:Reflection}). 

We claim that 
\begin{equation}
\label{hypissue}
|mx| \geqslant |ms^*| \text{ for every } x \in \gamma_1^*,
\end{equation}
and inequality~(\ref{hypissue}) is sharp, unless $x = s^*$, or $m$ coincides with the center of $\gamma_1^*$ (which is only possible for either $\kappa \geqslant 0$, or $\kappa < 0$ and $\lambda > \sqrt{-\kappa}$, that is when $\gamma_1^*$ is a circular arc). The claim and the reflection symmetry of $\Gamma_1^*$ would imply, in particular, that $\rho_\lambda(L^*) = |ms^*|$.

In order to conclude (\ref{hypissue}) we will use the following geometric argument. First assume that we are in the hyperbolic case ($\kappa < 0$) and suppose $\lambda \leqslant \sqrt{-\kappa}$. Since the geodesic $s^*m$ is a symmetry axis of $F_\lambda(s)$, the distance function $\dist(m, \cdot)$ restricted on $\partial F_\lambda(s)$ is strictly increasing as we move along the curve $\partial F_\lambda(s)$ by increasing the arc length distance to $s^*$. For example, it can be verified in the Poincar\'{e} disk model: a hyperbolic circle centered at $m$ is disjoint from $\partial F_\lambda(s)$ if its radius is less than $|ms^*|$, touches $\partial F_\lambda(s)$ at $s^*$ if its radius is exactly $|ms^*|$, and intersects $\partial F_\lambda(s)$ in a pair of points symmetric with respect to $s^*m$ if the radius is greater than $m s^*$. Since hyperbolic circles and the curve $\partial F_\lambda(s)$ are Euclidean circles in this model, the monotonicity is easily observed from this intersection property. Therefore, (\ref{hypissue}) holds true in the case $\kappa < 0$ and $\lambda \leqslant \sqrt{-\kappa}$. A similar argument applies in the case $\kappa \geqslant 0$, or $\kappa < 0$ and $\lambda > \sqrt{-\kappa}$, that is when $F_\lambda(s)$ is a geodesic disk. For this we need to observe that any two circles in $M^2(\kappa)$ intersect in at most two points, and that $m$ necessarily lies on the geodesic segment connecting $s^*$ and the center of $F_\lambda(s)$.

In particular, (\ref{hypissue}) holds for $x = s$. Therefore,
\begin{equation}
\label{est1}
|ms| \geqslant |m s^*| = \rho_\lambda(L^*) \geqslant \rho_\lambda(L),
\end{equation}
where we used monotonicity of $\rho_\lambda(\cdot)$ (Lemma~\ref{convlem}). This finishes the proof of inequality in Theorem~\ref{inradestthm}. 

Let us analyze the equality case. If equality is attained in (\ref{inradest}), then there must be at least a pair of points $s_1 \in \gamma_1$ and $s_2 \in \gamma_2$ for which in (\ref{goal1}) we have equality. Indeed, existence of at least one such point, say $s_1$ on $\gamma_1$, follows directly from the equality assumption in (\ref{inradest}). Note that if $s_1$ is one of the endpoints of $\gamma_1$, we are done; so we can assume that at the endpoints of $\gamma_1$, and thus of $\gamma_2$, inequality (\ref{goal1}) is strict. If $|ms| > \rho_\lambda(L)$ for all $s\in \gamma_2$, then $\dist(m, \gamma_2) > \rho_\lambda(L)$ while $\dist(m, \gamma_1) = \rho_\lambda(L)$. Hence the disk of radius $\rho_\lambda(L)$ centered at $m$ can be moved in the direction orthogonal to $p^*q^*$ so that it will be entirely in the interior of $D$. That would mean that the inradius of $D$ is strictly larger than $\rho_\lambda(L)$, which is in contradiction to the equality assumption in (\ref{inradest}). 

Consider the point $s_1 \in \gamma_1$. Then (\ref{est1}) for $s = s_1$ implies $\rho_\lambda(L^*) = \rho_\lambda(L)$, and since $\rho_\lambda(\cdot)$ is strictly increasing (see Lemma~\ref{convlem}), this is only  possible when $L = L^*$. But, unless $\Gamma_1 = \Gamma_1^*$, the inclusion of the convex regions enclosed by these curves implies strict inequality between their lengths. Hence, necessarily $\Gamma_1 = \Gamma_1^*$ and $\gamma_1 = \gamma_1^*$. The same flow of arguments applied to $s_2 \in \gamma_2$ leads to the conclusion $\gamma_2 = \gamma_2^*$ (the curves $\gamma_2$ and $\gamma_2^*$ are defined analogously to $\gamma_1$ and $\gamma_1^*$). Therefore, equality in (\ref{inradest}) is only possible when $D$ is a $\lambda$-convex lune. Theorem~\ref{inradestthm} for $M = M^2(\kappa)$ is proven.

B) Let us move to the general case. By Theorem~\ref{BorCBB}, consider a convex cap $D_\kappa \subset M^3(\kappa)$ isometric to $D$ with the planar boundary $\partial D_\kappa \subset M^2(\kappa) \simeq \pi \subset M^3(\kappa)$. Suppose $D_\kappa^*$ is the domain enclosed by $\partial D_\kappa$ in the two-dimensional totally geodesic plane $\pi \simeq M^2(\kappa)$ (planarity is guaranteed by property 2) in Theorem~\ref{BorCBB}). If $r^*$ is the inradius of $D_\kappa^*$, then by the graph property of $D_\kappa$ (see property 4) in Theorem~\ref{BorCBB}) we have
\begin{equation}
\label{ineq1}
r \geqslant r^*.
\end{equation}
Indeed, let $\omega^*$ be an inscribed circle for $D^*_\kappa$, and $p^*$ be the center of $\omega^*$. Consider a curve $\omega \subset D_\kappa$ and a point $p \in D_\kappa$ that are projected orthogonally onto $\omega^*$ and $p^*$, respectively. Then for every $q \in \omega$ and the corresponding $q^* \in \omega^*$ we have
\[
|pq|_{D_\kappa} \geqslant |p^*q^*|_{M^3(\kappa)} = |p^* q^*|_{\pi}= r^*
\]
(where $|\cdot|_{D_\kappa}$ is the distance between points measured intrinsically in $D_\kappa$). 
Therefore, the cap $D_\kappa$ contains an intrinsic ball of radius $r^*$ and with the center at $p$, and hence the inradius of $D_\kappa$ cannot be smaller than $r^*$. This proves~(\ref{ineq1}).

By properties 1) and 3) of Theorem~\ref{BorCBB}, $D_\kappa^*$ is a $\lambda$-convex domain with the boundary curve of length $L$. Thus we are in position of applying the result of part A) to this domain; we obtain
\begin{equation}
\label{ineq2}
r^* \geqslant \rho_\lambda (L).
\end{equation}
Inequalities (\ref{ineq1}) and (\ref{ineq2}) together imply estimate (\ref{inradest}) in Theorem~\ref{inradestthm} for CBB($\kappa$) spaces. Moreover, in order to have equality in~(\ref{inradest}), one should have equalities in both~(\ref{ineq1}) and~(\ref{ineq2}). Equality in~(\ref{ineq1}) is attained if $D_\kappa$ coincides with $D_\kappa^*$. Equality in~(\ref{ineq2}) implies, by the consideration in A), that $D_\kappa^*$ is a $\lambda$-convex lune. Hence, if equality in (\ref{inradest}) is attained, $D$ is necessarily isometric to a $\lambda$-convex lune in $M^2(\kappa)$. Converse is obvious. Theorem~\ref{inradestthm} is proven.
\end{proof}

\begin{proof}[Proof of equivalence of Theorem~\ref{estthm} and Theorem~\ref{inradestthm}]

Theorem~\ref{estthm} follows immediately from Theorem~\ref{inradestthm} and Lemma~\ref{convlem}. Conversely, Theorem~\ref{estthm} and Lemma~\ref{convlem} imply Theorem~\ref{inradestthm}.
\end{proof}

\section*{Concluding remarks and open questions}

We proved that among all $\lambda$-convex domains of the same perimeter $\lambda$-convex lunes have the smallest inradius. A similar question can be asked: find among all $\lambda$-convex domains of a given \textit{area} those having the smallest inradius. Such domains necessarily exist, and we pose the following conjecture.

\begin{conjecture}
\label{Conj}
In two-dimensional $\CBB(\kappa)$ spaces, among all $\lambda$-convex domains (homeomorphic to a disk) of a given area $\lambda$-convex lunes are the only ones with the smallest inradius.
\end{conjecture}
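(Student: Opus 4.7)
The plan is to mirror the strategy used for Theorem~\ref{inradestthm}, but to bisect the area of $D$ instead of its perimeter, and to replace the perimeter-indexed function $\rho_\lambda$ by an area-indexed function $\tilde\rho_\lambda(A)$ giving the inradius of a $\lambda$-convex lune of area $A$. The two essential ingredients are: (i) an area-bisection analog of Lemma~\ref{ivthm}, and (ii) monotonicity of $\tilde\rho_\lambda$ in $A$. Ingredient (ii) is a direct computation analogous to Lemma~\ref{convlem}: a $\lambda$-convex lune in $M^2(\kappa)$ is a one-parameter family, and as the opening angle at the vertex varies, area, perimeter, and inradius all increase strictly monotonically together. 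Ingredient (i) is obtained by redefining $f(x) \in \gamma$ to be the unique point such that the chord $xf(x)$ splits $D$ into two subregions of equal area; this $f$ is continuous and an involution because the condition is symmetric in $x$ and $f(x)$, so the same intermediate value argument using $g(x) = \alpha_x + \beta_{f(x)} - \pi$ and the relation $g(f(x)) = -g(x)$ produces $p^*, q^* \in \gamma$ for which the chord $p^*q^*$ bisects the area of $D$ \emph{and} satisfies the angle identity~(\ref{Eq:AngleCond}).

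In the constant curvature case $M = M^2(\kappa)$ the rest of the proof then proceeds verbatim as in part A) of the proof of Theorem~\ref{inradestthm}. Letting $m$ be the midpoint of $p^*q^*$, reflecting one half $\gamma_1$ of $\gamma$ in $m$ produces a centrally symmetric $\lambda$-convex closed curve $\Gamma_1$ which, by the choice of $p^*q^*$, bounds a convex region of area exactly $A(D)$. Blaschke's Rolling Theorem (Theorem~\ref{Thm:BlaRoll}) traps this region inside the lune $F_\lambda(s) \cap F_\lambda(s')$ whose area $A^*$ therefore satisfies $A^* \geqslant A(D)$. The same elementary distance comparison that yields~(\ref{hypissue}) gives $|ms| \geqslant |ms^*| = \tilde\rho_\lambda(A^*) \geqslant \tilde\rho_\lambda(A(D))$ for every $s \in \gamma$, so $r \geqslant \tilde\rho_\lambda(A(D))$. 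Equality forces $\Gamma_1$ to coincide with the boundary of $F_\lambda(s)\cap F_\lambda(s')$ on both sides of $p^*q^*$, which, by the involutive symmetry between $\gamma_1$ and $\gamma_2$, forces $D$ itself to be a lune.

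The main obstacle is extending this from $M^2(\kappa)$ to a general two-dimensional $\CBB(\kappa)$ space. The reduction via Theorem~\ref{BorCBB} that worked so cleanly for perimeter breaks down for area: the convex cap $D_\kappa \subset M^3(\kappa)$ is isometric to $D$ and its planar shadow $D_\kappa^*$ has the \emph{same} boundary curve (hence the same $L$), but since $D_\kappa$ is a graph over $\pi$, its intrinsic area $A$ is strictly \emph{larger} than the area $A^*$ of $D_\kappa^*$ unless $D_\kappa$ is already planar. Applying the $M^2(\kappa)$-statement to $D_\kappa^*$ gives $r \geqslant r^* \geqslant \tilde\rho_\lambda(A^*)$, but monotonicity of $\tilde\rho_\lambda$ then points the wrong way: $\tilde\rho_\lambda(A^*) \leqslant \tilde\rho_\lambda(A)$, so one does not recover $r \geqslant \tilde\rho_\lambda(A)$. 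This is precisely why the statement remains a conjecture.

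To circumvent the obstacle, the natural next attempts would be either (a) to strengthen Theorem~\ref{BorCBB} to an area-preserving deformation of $D$ onto a planar $\lambda$-convex domain — perhaps by combining an intrinsic area-preserving symmetrization with Borisenko's embedding construction — or (b) to work entirely intrinsically in $M$, running the area-bisection chord and reflection argument directly in the $\CBB(\kappa)$ space by using doubling along a bisecting geodesic and Alexandrov's gluing theorem to control the resulting glued surface. Option (b) is appealing because the doubling of $D$ along the chord $p^*q^*$ produces a $\CBB(\kappa)$ disk whose boundary is $\Gamma_1$ of length $L$ and which is $\lambda$-convex in the intrinsic sense, at which point Theorem~\ref{inradestthm} applied to this doubled object, together with a careful comparison between intrinsic inradius in the doubling and the original inradius of $D$, may deliver the area-based bound. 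Making this comparison rigorous — in particular, showing that the doubling does not create inradius that is absent in $D$ — is the step I expect to be delicate, and is the technical heart of the conjecture.
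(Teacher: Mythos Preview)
The statement you are addressing is a \emph{conjecture}: the paper offers no proof of it. What the paper does say, immediately after stating the conjecture, is that the constant curvature case $M = M^2(\kappa)$ is ``not hard'' and can be done ``essentially, by using the same techniques as were used in this paper,'' while the extension to general $\CBB(\kappa)$ spaces is open precisely because an area-preserving analog of Theorem~\ref{BorCBB} is lacking.

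Your proposal is therefore not to be compared against a proof in the paper, but against this informal assessment --- and it matches it exactly. Your outline for $M^2(\kappa)$ (area-bisecting chord via an involution $f$ and the intermediate value argument, reflection in the midpoint $m$, Blaschke rolling, monotonicity of the area-indexed inradius $\tilde\rho_\lambda$) is precisely the adaptation the authors have in mind when they say the same techniques work. Your diagnosis of the obstruction in the general case --- that the planar shadow $D_\kappa^*$ satisfies $A^* \leqslant A$, so monotonicity of $\tilde\rho_\lambda$ points the wrong way --- is exactly the reason the paper gives for leaving the statement as a conjecture.

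In short: you have not proved the conjecture, and you do not claim to. Your $M^2(\kappa)$ argument is correct and is the intended one; your identification of the gap in the $\CBB(\kappa)$ case is accurate and coincides with the paper's own. The speculative routes (a) and (b) you sketch at the end are reasonable research directions but, as you acknowledge, neither is a proof.
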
 

It is not hard to prove this conjecture for constant curvature spaces $M^2(\kappa)$ (essentially, by using the same techniques as were used in this paper). However, the problem of extending this to general metric spaces (in particular, what is an analog of Theorem~\ref{BorCBB} that reduces questions about equal-area domains to model spaces?) remains unsolved.

Parallel to an inradius we can define a \textit{circumscribed radius}
\[
R:= \min_{p \in D} \max_{q \in \partial D} |pq|_M
\]
for a closed topological disk $D$ in a metric space $M$. We conjecture that $\lambda$-convex lunes are the only \textit{maximizers} of the circumscribed radius among all isoperimetric $\lambda$-convex domains. This is certainly true in $M^2(\kappa)$; an extension to metric spaces of bounded curvature is an open question.

Finally, all questions mentioned in this paper in a two-dimensional setting make sense, and hence give rise to unsolved problems, for multidimensional $\lambda$-convex domains (see~\cite{Bor02, BorDr13} for the $n$-dimensional setup).

\section*{Acknowledgments}
The author would like to thank Alexander Borisenko for looking through one of the preliminary versions of the manuscript and providing useful remarks. We are also grateful to the anonymous referee for the suggestions that helped to improve the exposition.

The research was partially supported by the Akhiezer Foundation (Kharkiv, Ukraine) and by the advanced grant ``HOLOGRAM'' of the European Research Council (ERC).


\end{document}